\newtheorem{thm}{Theorem}[section]
\newtheorem{prop}[thm]{Proposition}
\newtheorem{cor}[thm]{Corollary}
\newtheorem{lemma}[thm]{Lemma}
\theoremstyle{definition}
\newtheorem{defn}[thm]{Definition}
\newtheorem{remark}[thm]{Remark}
\newcommand{\C}{\mathbb{C}}
\newcommand{\F}{\mathbb{F}}
\newcommand{\M}{\mathbb{M}}
\newcommand{\map}{\rightarrow}
\newcommand{\cl}{\mathrm{cl}}
\newcommand{\tmf}{\mathit{tmf}}
\newcommand{\mmf}{\mathit{mmf}}
\DeclareMathOperator{\Ext}{Ext}	
\DeclareMathOperator{\Sq}{Sq}
\begin{document}

\title{The Mahowald operator in the cohomology of the 
Steenrod algebra}

\author{Daniel C.\ Isaksen}
\address{Department of Mathematics\\
Wayne State University\\
Detroit, MI 48202, USA}
\email{isaksen@wayne.edu}
\thanks{The author was supported by NSF grant DMS-1606290.}

\subjclass[2010]{Primary 55T15, 16T05;
Secondary 55S30, 55Q45, 14F42}

\keywords{cohomology of the Steenrod algebra, Adams spectral
sequence, Mahowald operator}

\date{\today}

\begin{abstract}
We study the Mahowald operator $M = \langle g_2, h_0^3, - \rangle$
in the cohomology of the Steenrod algebra.  We show that the
operator interacts well with the cohomology of $A(2)$, in both the
classical and $\C$-motivic contexts.
This generalizes previous work of Margolis, Priddy, and Tangora.
\end{abstract}

\maketitle

\section{Introduction}
\label{sctn:intro}

The cohomology of the Steenrod algebra is an algebraic object
that serves as the input to the Adams spectral sequence.
Therefore, its computation is of fundamental importance to the
study of the stable homotopy groups of spheres.
The goal of this note is to study
part of the cohomology of the Steenrod algebra that
displays some regular structure.  We work in the $\C$-motivic
context.  Our results have immediate classical
consequences, most of which are already known \cite{MPT70} or can be
readily deduced from the results of \cite{MPT70}.
The ultimate goal of this
study is to serve as an aid in a detailed analysis of the Adams spectral
sequence \cite{IWX}.

Let $A$ be the $\C$-motivic 
Steenrod algebra at the prime $2$
\cite{Voevodsky03a} \cite{Isaksen14}.  Let $\M_2 = \F_2[\tau]$ be the 
$\C$-motivic cohomology of a point 
with $\F_2$-coefficients \cite{Voevodsky03b}.  We are interested in the
algebraic object
$\Ext_\C = \Ext_{A} (\M_2, \M_2)$ because 
it serves as the $E_2$-page for the $\C$-motivic
Adams spectral sequence.
These $\Ext$ groups are of increasingly wild complexity as the
dimension increases.  The May spectral sequence \cite{May64} can be used
to compute them in a range.  Machines can compute in an even
larger range 
\cite{Bruner89} \cite{Bruner93} \cite{Bruner97} \cite{Nassau}. 
In either case,
these methods cannot determine the entire structure
because it is of infinite complexity.

Nevertheless, parts of the computation display regularity.
For example, Adams described a regular $v_1$-periodic
pattern near the ``top of the Adams chart", i.e., when the
Adams filtration is large relative to the stem \cite{Adams66b}.  
May 
extended this $v_1$-periodicity to a larger range \cite{May-unpublished}.
(See also \cite{Li19} for results about $\C$-motivic $v_1$-periodicity.)

The goal of this article is similar.  We study the
Mahowald operator $\langle g_2, h_0^3, - \rangle$, which is 
defined on all elements $x$ such that $h_0^3 x$.  We will 
show that the Mahowald operator behaves regularly in a certain way.

This article is very much inspired by the work of
Margolis, Priddy, and Tangora \cite{MPT70}.
We are extending those results in two senses.
First, we are working in the $\C$-motivic, rather than
classical, context.  Classical results can easily be
deduced from our $\C$-motivic results by inverting $\tau$.
Second, we work with a larger subalgebra of the Steenrod algebra,
and therefore can detect more classes.

When discussing $\Ext$ groups, we grade elements in the
form $(s,f,w)$, where $s$ is the stem, $f$ is the Adams filtration,
and $w$ is the motivic weight.
See \cite{Isaksen14} \cite{Isaksen14a} for more details
about notation and for specific computations.

Recall that $A(2)$ is the $\M_2$-subalgebra of $A$
generated by $\Sq^1$, $\Sq^2$, and $\Sq^4$.
We have a complete understanding of its cohomology,
i.e., of 
$\Ext_{A(2)} = \Ext_{A(2)}(\M_2, \M_2)$ \cite{Isaksen09}.

\begin{thm}
\label{thm:main}
There exists a sub-Hopf algebra $B$ of the $\C$-motivic Steenrod algebra
(defined below in Definition \ref{defn:B}) such that:
\begin{enumerate}
\item
$\Ext_B = \Ext_B(\M_2, \M_2)$ is isomorphic to 
$\M_2[v_3] \underset{\M_2}{\otimes} \Ext_{A(2)}$,
where $v_3$ has degree $(14,1,7)$.
\item
the map
$p_*: \Ext_{\C} \map \Ext_B$
takes $M x$ to the product $(e_0 v_3^2 + h_1^3 v_3^3) p_*(x)$, 
whenever $M x$ is defined.
Also, $p_*$ takes the indeterminacy in $M x$ to zero.
\end{enumerate}
\end{thm}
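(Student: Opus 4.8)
The plan is to handle the two parts in turn: the structural statement (1) follows from a Hopf-algebra extension together with a collapse argument, while the formula in (2) is obtained by naturality of Massey products and an explicit computation inside the completely understood algebra $\Ext_B$.

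For part (1) I would construct $B$ through its dual. The dual $B_*$ is the quotient of the $\C$-motivic dual Steenrod algebra that agrees with $A(2)_*$ except that the truncation of $\xi_3$ is relaxed, so that $B_*$ retains one further exterior generator $\zeta=\xi_3^2$ of degree $(14,7)$ in the (stem, weight) grading, dual to $v_3$. This exhibits $B$ as an extension of the normal sub-Hopf-algebra $A(2)$, with quotient $B/\!/A(2)$ dual to the exterior algebra on $\zeta$; hence $\Ext_{B/\!/A(2)}=\M_2[v_3]$. Feeding the extension into the associated Cartan--Eilenberg spectral sequence gives $E_2=\M_2[v_3]\otimes_{\M_2}\Ext_{A(2)}\Rightarrow\Ext_B$, with $v_3$ in Cartan--Eilenberg filtration $1$. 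Since there is no room for differentials into or out of the polynomial generator $v_3$, the spectral sequence collapses at $E_2$, and one checks that there are no hidden multiplicative extensions, yielding the claimed algebra isomorphism.

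For part (2) the first move is naturality. The Hopf-algebra inclusion $p\colon B \hookrightarrow A$ induces $p_*\colon\Ext_\C\map\Ext_B$, and because Massey products are natural under ring maps, $p_*(Mx)=p_*\langle g_2,h_0^3,x\rangle$ lies in $\langle p_*g_2,\,h_0^3,\,p_*x\rangle$, using $p_*h_0=h_0$. The problem thus becomes a computation in $\Ext_B$. I would first pin down $p_*g_2$: by its degree $(44,4,24)$ and the tensor decomposition from part (1) it is a sum $\sum_k v_3^{\,k}b_k$ with $b_k\in\Ext_{A(2)}$, which I would determine by tracking $g_2$ through the cobar complex or through the May spectral sequence for $B$. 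The relation $g_2h_0^3=0$ in $\Ext_\C$ descends to $p_*g_2\cdot h_0^3=0$ in $\Ext_B$, and I would use the resulting nullhomotopy to show that the secondary operator $\langle p_*g_2,h_0^3,-\rangle$ on $\ker(h_0^3)$ is multiplication by a single fixed class $m\in\Ext_B$. Reducing every bracket to the known Massey-product structure of $\Ext_{A(2)}$, with $v_3$ treated as a central polynomial variable, I expect to read off $m=e_0v_3^2+h_1^3v_3^3$.

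It then remains to kill the indeterminacy. The indeterminacy of $\langle g_2,h_0^3,x\rangle$ is $g_2\cdot\Ext_\C+\Ext_\C\cdot x$ in the relevant tridegrees; applying $p_*$ and using the explicit description of $\Ext_B$ from part (1), one checks that both summands map into tridegrees of $\Ext_B$ that vanish, so $p_*$ annihilates the indeterminacy and the value $(e_0v_3^2+h_1^3v_3^3)\,p_*x$ is well defined. The main obstacle is the middle step of part (2): proving that $\langle p_*g_2,h_0^3,-\rangle$ really is a single multiplication rather than merely a coset-valued additive operation, and identifying its multiplier precisely as $e_0v_3^2+h_1^3v_3^3$. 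This hinges on a careful cochain-level nullhomotopy for $p_*g_2\cdot h_0^3=0$ and on the way the extension class $v_3$ interacts with the bracket, which is the most delicate point of the argument.
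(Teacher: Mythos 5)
The decisive gap is in part (2). Your computation is set up to take place inside the ring $\Ext_B$: you propose to pin down $p_*(g_2)$ and then analyze the Massey product $\langle p_*g_2, h_0^3, p_*x\rangle$ formed in $\Ext_B$. But $p_*(g_2)=0$, because $\Ext_B \cong \M_2[v_3]\otimes_{\M_2}\Ext_{A(2)}$ vanishes in degree $(44,4,24)$; this vanishing is not incidental, it is exactly what makes the construction work. Consequently $\langle p_*g_2,h_0^3,p_*x\rangle = \langle 0,h_0^3,p_*x\rangle$ is pure indeterminacy: it is the set of all products $u\cdot p_*(x)$ with $u$ ranging over degree $(45,6,24)$ of $\Ext_B$, and that group is nonzero (it contains $e_0v_3^2$ and $h_1^3v_3^3$). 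So naturality of Massey products gives essentially no information, and your proposed ``secondary operator'' on $\ker(h_0^3)$ cannot be multiplication by a single fixed class: any nullhomotopy chosen inside $C^*(B)$ is as good as any other, and different choices change the answer by an arbitrary class in degree $(45,6,24)$. The multiplier is determined by data living upstairs in $C^*(A)$, not in $C^*(B)$. The missing idea is to exploit the fact that $\tilde{p}\colon C^*(A)\map C^*(B)$ is a map of differential graded algebras, so that $C^*(B)$ is a $C^*(A)$-module, and to form the hybrid (Toda-style) bracket $\langle 1, g_2, h_0^3\rangle$ in $\Ext_B$, whose defining data consist of a cobounding of $\tilde{p}(\bar{g}_2)$ in $C^*(B)$ together with a nullhomotopy of $\bar{g}_2\bar{h}_0^3$ taken in $C^*(A)$. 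The shuffle $p_*(Mx) = 1\cdot\langle g_2,h_0^3,x\rangle = \langle 1,g_2,h_0^3\rangle\cdot x$ then yields the product formula and the vanishing of the image of the indeterminacy simultaneously, once one shows $\langle 1,g_2,h_0^3\rangle$ has no indeterminacy. You correctly flag this as ``the most delicate point,'' but the repair you sketch (an internal nullhomotopy for $p_*g_2\cdot h_0^3=0$) cannot work even in principle.

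There are three further problems. First, even with the module bracket in hand, identifying the multiplier is not a routine reduction to the Massey-product structure of $\Ext_{A(2)}$ with $v_3$ central: one needs the May differential $d_6((b_{21}b_{40}+b_{30}b_{31})h_0(1)) = h_0^3g_2$ and the May Convergence Theorem, and even then the answer is only determined up to the lower-May-filtration class $h_1^3v_3^3$, i.e., the bracket is either $e_0v_3^2$ or $e_0v_3^2+h_1^3v_3^3$. Resolving that ambiguity requires external input, namely the relation $Mh_1^6 = e_0^3 + d_0\cdot e_0g$ of Guillou and Isaksen, which forces $p_*(e_0)=e_0+h_1^3v_3$ and hence the second answer; your proposed reduction would most naturally output the wrong value $e_0v_3^2$. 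Second, your indeterminacy argument is false as stated: the summand $\Ext_\C\cdot x$ of the indeterminacy does not map into vanishing tridegrees of $\Ext_B$ (the relevant tridegree contains $e_0v_3^2\,p_*(x)$ and $h_1^3v_3^3\,p_*(x)$, which are nonzero for many $x$); it dies because the generator $h_0h_5d_0$ of $\Ext_\C$ in degree $(45,6,24)$ satisfies $p_*(h_0h_5d_0)=0$, since $p_*(h_5)=0$ for degree reasons. Third, in part (1) your description of $B_*$ is wrong: the added exterior generator dual to $v_3$ is $\tau_3$ (classically $\zeta_4$), of stem $14$ and weight $7$, not $\xi_3^2$, which sits in degree $(28,14)$; moreover ``relaxing the truncation of $\xi_3$'' is incompatible with the motivic relation $\tau_2^2 = \tau\xi_3$ once $\tau_2^2=0$. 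Your Cartan--Eilenberg argument could be made to work after this correction, but it is unnecessary: $\tau_3$ is primitive in $B_*$, so $B_*$ splits as the tensor product of Hopf algebras $A(2)_*\otimes_{\M_2}\M_2[\tau_3]/\tau_3^2$, and the Ext computation follows with no spectral sequence at all.
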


The theorem can also be stated in the classical context,
in an essentially identical form.

Theorem \ref{thm:main}
allows us to extract much information about the global structure
of $\Ext_\C$.  The following corollary 
gives partial information about the structure
of $\Ext_\C$ in very high dimensions.

\begin{cor}
\label{cor:main}
Let $x$ be an element of $\Ext_\C$ such that
$h_1^3 x = 0$.
Also suppose that the image of $e_0^k x$ 
in $\Ext_{A(2)}$ is non-zero for some $k \geq 0$.
Then 
$M^k x$ is non-zero in $\Ext_\C$.
\end{cor}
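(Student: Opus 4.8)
The plan is to transport the question into $\Ext_B$, where by Theorem \ref{thm:main}(1) the ring structure is completely explicit, and to detect $M^k x$ there through the multiplicative map $p_*$. First I would iterate the formula of Theorem \ref{thm:main}(2). Because $p_*$ is a ring homomorphism and, by the same theorem, carries the indeterminacy of each bracket to zero, applying $M$ a total of $k$ times gives, whenever $M^k x$ is defined, the unambiguous identity
\[
p_*(M^k x) = (e_0 v_3^2 + h_1^3 v_3^3)^k \, p_*(x)
\]
in $\Ext_B$. The one genuinely delicate point, and the step I expect to be the main obstacle, is definedness of the iterate: to form $M^{j+1} x = \langle g_2, h_0^3, M^j x \rangle$ one needs $h_0^3 M^j x = 0$ in $\Ext_\C$ for each $0 \le j < k$, and this cannot be checked through $p_*$, since the vanishing of $p_*(h_0^3 M^j x)$ in $\Ext_B$ does not force $h_0^3 M^j x = 0$ upstairs. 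These $h_0^3$-conditions must therefore either be read as part of the meaning of ``$M^k x$ is defined'' or be established by a separate argument directly in $\Ext_\C$.

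Next I would use the hypothesis $h_1^3 x = 0$. Applying $p_*$ gives $h_1^3 p_*(x) = 0$, so in the binomial expansion of $(e_0 v_3^2 + h_1^3 v_3^3)^k$ every term carrying a positive power of $h_1^3$ annihilates $p_*(x)$ (as $h_1^{3j} p_*(x) = h_1^{3(j-1)}(h_1^3 p_*(x)) = 0$ for $j \ge 1$). Only the leading term survives, and the identity collapses to
\[
p_*(M^k x) = e_0^k v_3^{2k} \, p_*(x).
\]
This reduction invokes $h_1^3 x = 0$ only once, at the level of $p_*(x)$, so no definedness of higher $h_1^3$-multiples is required.

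Finally I would read off non-triviality from the decomposition $\Ext_B \cong \M_2[v_3] \otimes_{\M_2} \Ext_{A(2)}$. Write $p_*(x) = \sum_{i \ge 0} v_3^i a_i$ with $a_i \in \Ext_{A(2)}$. The restriction $\Ext_\C \map \Ext_{A(2)}$ recording the image in $\Ext_{A(2)}$ factors as $p_*$ followed by the natural projection $\Ext_B \map \Ext_{A(2)}$, which is the augmentation $v_3 \mapsto 0$; hence it sends $x$ to $a_0$ and $e_0^k x$ to $e_0^k a_0$, which is non-zero by hypothesis. Since $\Ext_B = \bigoplus_{i \ge 0} v_3^i \, \Ext_{A(2)}$ as an $\M_2$-module, an element is non-zero as soon as one of its $v_3$-coefficients is; here
\[
p_*(M^k x) = \sum_{i \ge 0} v_3^{2k+i} \, e_0^k a_i
\]
has $v_3^{2k}$-coefficient $e_0^k a_0 \ne 0$. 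Therefore $p_*(M^k x) \ne 0$, and so $M^k x \ne 0$ in $\Ext_\C$, as claimed.
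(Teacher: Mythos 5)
Your argument is correct and is essentially the paper's own proof: the paper's proof consists of citing Theorem \ref{thm:main} to conclude that $p_*(M^k x)$ is non-zero in $\Ext_B$, and your iteration of the formula, the collapse of the binomial expansion via $h_1^3 x = 0$, and the reading off of the $v_3^{2k}$-coefficient in $\M_2[v_3] \otimes_{\M_2} \Ext_{A(2)}$ are exactly the details behind that citation. The definedness issue you flag as the main obstacle is resolved in the paper (Section 4) not through $p_*$ but by a shuffling relation in $\Ext_\C$ itself, namely $h_0^3 \langle g_2, h_0^3, y \rangle = \langle h_0^3, g_2, h_0^3 \rangle y = 0$, which shows that once $h_0^3 x = 0$ holds, all iterates $M^k x$ are automatically defined.
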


\begin{proof}
By Theorem \ref{thm:main},
$p_*(M^k x)$ is non-zero in $\Ext_B$.
Therefore, $M^k x$ must be non-zero.
\end{proof}

For example, $x$ could be $h_1$, $h_2$, $P^i h_1$, $c_0$, or many
other possibilities as well.
Some, but not all, cases of Corollary \ref{cor:main} are already covered
by \cite{MPT70}.

Here is an even more explicit
illustration of the kind of information that can be deduced
from Corollary \ref{cor:main}.
Consider the element $h_0 d_0$.  We have that
$h_0 d_0 e_0^k$ is non-zero in $\Ext_{A(2)}$ for all
$k \geq 0$. (However, $\tau^2 h_0 d_0 e_0^k = 0$ in $\Ext_{A(2)}$
when $k \geq 2$).  We can conclude that $M^k h_0 d_0$
is non-zero in $\Ext_\C$ for all $k \geq 0$, even though
it may be annihilated by powers of $\tau$.

Theorem \ref{thm:main} detects additional phenomena in $\Ext_A$
that are not captured by Corollary \ref{cor:main}.
Namely, the theorem can be used to study classes in 
$\Ext_A$ whose image under $p_*$ is non-zero.
Some examples are listed in Table \ref{tab:p*}.  This list is
far from exhaustive.

\begin{longtable}{lll}
\caption{Some values of the map $p_*:\Ext_\C \map \Ext_B$
\label{tab:p*} 
} \\
\toprule
$(s, f, w)$ & $x$ & $p_*(x)$ \\
\midrule \endfirsthead
\caption[]{Some values of the map $p_*:\Ext_\C \map \Ext_B$} \\
\toprule
$(s, f, w)$ & $x$ & $p_*(x)$ \\
\midrule \endhead
\bottomrule \endfoot
$(53,10,28)$ & $M P$ & $P e_0 v_3^2 + P h_1^3 v_3^3$ \\
$(56,10,29)$ & $\Delta^2 h_1 h_3$ & $\tau P g v_3^2$ \\ 
$(60,9,32)$ & $B_4$ & $a g v_3^2$ \\
$(66,10,35)$ & $\tau B_5$ & $\tau h_2 n g v_3^2$ \\
$(90,12,48)$ & $M^2$ & $d_0 g v_3^4 + h_1^6 v_3^6$ \\
\end{longtable}

Sometimes, analysis of a particular Adams differential requires
knowledge of the algebraic structure of $\Ext_\C$ in
much higher dimensions.  If the higher dimension is not too large,
then one can rely on explicit machine computations.
But if the higher dimension goes beyond the current range
of machine computations,
then results such as Corollary \ref{cor:main} can be of
great use.
See \cite{IWX} for specific examples of precisely this situation.

The subalgebra $A(2)$ of the $\C$-motivic Steenrod algebra
is of particular importance because there exists a 
$\C$-motivic modular forms spectrum $\mmf$ \cite{GIKR18}
whose cohomology is isomorphic to $A//A(2)$.
This implies that $\Ext_{A(2)}$ is the $E_2$-page of the
Adams spectral sequence that converges to the homotopy groups
of $\mmf$.

One might hope that the sub-Hopf algebra $B$ is similarly 
realizable.  We will show in Theorem \ref{thm:realize}
that it is not.
In other words, while
$\Ext_B$ is useful for studying the 
algebraic structure of the $\C$-motivic Adams $E_2$-page
$\Ext_\C$, it cannot be used to study Adams differentials.

\section{A subalgebra of the $\C$-motivic Steenrod algebra}

Recall that 
the dual $\C$-motivic Steenrod algebra $A_*$ \cite{Voevodsky03b}
\cite{Isaksen14} takes the form
\[
\frac{\M_2[\tau_0, \tau_1, \ldots, \xi_1, \xi_2, \ldots]}
{\tau_i^2 = \tau \xi_{i+1}},
\]
where $\M_2 = \F_2[\tau]$.
The coproduct of $A_*$ is given by the formulas
\[
\tau_i \mapsto \tau_i \otimes 1 +
\sum_{k = 0}^i \xi_{i-k}^{2^k} \otimes \tau_{k}
\quad \quad \quad \quad \quad
\xi_i \mapsto 
\sum_{k = 0}^i \xi_{i-k}^{2^k} \otimes \xi_{k}.
\]
By convention, we let $\xi_0$ equal $1$.

\begin{defn}
\label{defn:B}
Let $B_*$ be the quotient 
\[
\frac{\M_2[\tau_0, \tau_1, \tau_2, \tau_3, \xi_1, \xi_2]}
{\tau_0^2 + \tau \xi_1, \xi_1^4, \tau_1^2 + \tau \xi_2, \xi_2^2, 
\tau_2^2, \tau_3^2}
\]
of $A_*$.
Let $B$ be the dual subobject of $A$.
\end{defn}

\begin{remark}
The classical dual Steenrod algebra $A^\cl_*$ takes the form
$\F_2[\zeta_1, \zeta_2, \ldots]$, which is
the result of inverting $\tau$
in $A_*$, where $\tau_i$ and $\xi_{i+1}$ correspond to
$\zeta_{i+1}$ and $\zeta_{i+1}^2$ respectively.
The classical analogue of $B_*$ is the 
quotient
\[
\frac{\F_2[\zeta_1, \zeta_2, \zeta_3, \zeta_4]}
{\zeta_1^8, \zeta_2^4, \zeta_3^2, \zeta_4^2}.
\]
\end{remark}

\begin{lemma}
\label{lem:B-split}
The quotient $B_*$ is a Hopf algebra that splits as
\[
\frac{\M_2[\tau_0, \tau_1, \tau_2, \xi_1, \xi_2]}
{\tau_0^2 + \tau \xi_1, \xi_1^4, \tau_1^2 + \tau \xi_2, \xi_2^2, \tau_2^2}
\underset{\M_2}{\otimes}
\frac{\M_2[\tau_3]}{\tau_3^2}.
\]
\end{lemma}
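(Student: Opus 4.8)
The plan is to first equip the quotient $B_*$ with a Hopf algebra structure, and then to extract the splitting from a single coproduct computation. The Hopf algebra structure amounts to showing that the kernel $I$ of the quotient map $A_* \map B_*$ is a Hopf ideal, that is, that $\Delta(I) \subseteq I \otimes A_* + A_* \otimes I$. The counit condition is immediate because $I$ is concentrated in positive degrees, and the antipode condition follows from commutativity of $A_*$ (a standard fact), so only the coideal condition requires attention. I would first observe that $I$ is generated by the elements $\xi_j$ for $j \geq 3$, the elements $\tau_i$ for $i \geq 4$, together with $\xi_1^4$ and $\xi_2^2$: indeed, $\tau_0^2 + \tau \xi_1$ and $\tau_1^2 + \tau \xi_2$ already vanish in $A_*$, while $\tau_2^2 = \tau \xi_3$ and $\tau_3^2 = \tau \xi_4$ lie in $I$ because $\xi_3$ and $\xi_4$ do. Since $I \otimes A_* + A_* \otimes I$ is an ideal of $A_* \otimes A_*$ and $\Delta$ is a ring homomorphism, it suffices to verify the coideal condition on these generators.

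For $\xi_1^4$ and $\xi_2^2$ the verification is immediate from additivity of the Frobenius in characteristic $2$: one finds $\Delta(\xi_1^4) = \xi_1^4 \otimes 1 + 1 \otimes \xi_1^4$ and $\Delta(\xi_2^2) = \xi_2^2 \otimes 1 + \xi_1^4 \otimes \xi_1^2 + 1 \otimes \xi_2^2$, both of which lie in $I \otimes A_* + A_* \otimes I$. For $\xi_j$ with $j \geq 3$ and for $\tau_i$ with $i \geq 4$, I would inspect each summand $\xi_{j-k}^{2^k} \otimes \xi_k$ (respectively $\xi_{i-k}^{2^k} \otimes \tau_k$) of the coproduct and argue that one of its two tensor factors lies in $I$. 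When the right-hand index is large this is clear from $\xi_k \in I$ or $\tau_k \in I$; when the right-hand index is small, the exponent $2^k$ is large enough that $\xi_{j-k}^{2^k}$ (respectively $\xi_{i-k}^{2^k}$) is divisible by $\xi_1^4$ or $\xi_2^2$ and hence lies in $I$. The only genuine work is to confirm this in the finitely many low-index cases, for instance $j \in \{3,4\}$ and $i \in \{4,5\}$, and this case analysis is the main, though entirely routine, obstacle.

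With $B_*$ established as a Hopf algebra, the splitting follows from the observation that $\tau_3$ is primitive in $B_*$. The general formula expresses the coproduct of $\tau_3$ as $\tau_3 \otimes 1 + \xi_3 \otimes \tau_0 + \xi_2^2 \otimes \tau_1 + \xi_1^4 \otimes \tau_2 + 1 \otimes \tau_3$, and all three of $\xi_3$, $\xi_2^2$, and $\xi_1^4$ vanish in $B_*$; hence the coproduct of $\tau_3$ reduces to $\tau_3 \otimes 1 + 1 \otimes \tau_3$. Consequently $\M_2[\tau_3]/\tau_3^2$, with $\tau_3$ primitive, is a sub-Hopf-algebra of $B_*$. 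Writing $C_*$ for the first displayed factor, one checks directly that the coproducts of its algebra generators $\tau_0, \tau_1, \tau_2, \xi_1, \xi_2$ involve none of the vanishing elements and hence lie in $C_* \otimes C_*$, so that $C_*$ is also a sub-Hopf-algebra. Since $\tau_3$ occurs in no defining relation other than $\tau_3^2 = 0$, the underlying $\M_2$-algebra of $B_*$ is the tensor product $C_* \otimes_{\M_2} \M_2[\tau_3]/\tau_3^2$; and because every algebra generator has a coproduct of tensor-product form, this algebra isomorphism is automatically an isomorphism of Hopf algebras, which is precisely the claimed splitting.
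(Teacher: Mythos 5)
Your proof is correct and follows essentially the same route as the paper's: the paper's proof simply asserts that the well-definedness of the coproduct on $B_*$ and the primitivity of $\tau_3$ in $B_*$ follow from ``direct computation,'' and you have carried out exactly those computations (identifying the kernel's generators, checking the coideal condition on them, and deducing the splitting from $\Delta(\tau_3) = \tau_3 \otimes 1 + 1 \otimes \tau_3$). One small caveat: your appeal to commutativity of $A_*$ for the antipode is not quite the right standard fact --- what makes the antipode automatic here is that $B_*$ is a quotient bialgebra of a connected graded Hopf algebra over $\M_2$, and connected graded bialgebras admit (unique) antipodes, or alternatively one can check $\chi(I) \subseteq I$ directly on generators --- but this side remark does not affect the substance of the argument.
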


\begin{proof}
To check that $B_*$ is a Hopf algebra, one must verify that 
the coproduct is well-defined.  In other words, if $x$
is an element of $A_*$ that maps to zero in $B_*$, then
the coproduct of $x$ in $A_*$ also maps to zero in $B_* \otimes B_*$.
This follows from direct computation.

The splitting also follows from direct computation.  Namely,
the coproduct of $\tau_3$ in $A_*$ maps to 
$1 \otimes \tau_3 + \tau_3 \otimes 1$ in $B_* \otimes B_*$.
\end{proof}

\begin{prop}
\label{prop:Ext-B}
$\Ext_B = \Ext_B(\M_2, \M_2)$ is isomorphic to
$\M_2[v_3] \underset{\M_2}{\otimes} \Ext_{A(2)}$,
where $v_3$ has degree $(14,1,7)$.
\end{prop}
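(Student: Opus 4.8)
The plan is to exploit the tensor splitting of Lemma \ref{lem:B-split} and reduce to two known computations, glued by a K\"unneth isomorphism. First I would recognize the left-hand factor
\[
\frac{\M_2[\tau_0, \tau_1, \tau_2, \xi_1, \xi_2]}
{\tau_0^2 + \tau \xi_1, \xi_1^4, \tau_1^2 + \tau \xi_2, \xi_2^2, \tau_2^2}
\]
as the dual $A(2)_*$ of the motivic subalgebra $A(2)$: this presentation is exactly the standard description of $A(2)_*$ as a quotient of $A_*$, where the truncations $\xi_1^4$, $\xi_2^2$, $\tau_2^2$ correspond to the classical relations $\zeta_1^8 = \zeta_2^4 = \zeta_3^2 = 0$, while $\tau_0^2 + \tau\xi_1$ and $\tau_1^2 + \tau\xi_2$ are the images of the defining relations $\tau_i^2 = \tau\xi_{i+1}$ of $A_*$. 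Dualizing the splitting of Lemma \ref{lem:B-split} then exhibits $B$ as a tensor product $A(2) \otimes_{\M_2} E$ of sub-Hopf algebras, where $E$ is dual to $\M_2[\tau_3]/(\tau_3^2)$.

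Next I would compute $\Ext_E$. The proof of Lemma \ref{lem:B-split} shows that $\tau_3$ is primitive in $B_*$, so $\M_2[\tau_3]/(\tau_3^2)$ is a primitively generated exterior Hopf algebra on a single generator, and hence $E$ is itself exterior on one primitive class $Q_3$ dual to $\tau_3$. The standard minimal resolution of $\M_2$ over an exterior algebra on a single primitive generator then gives $\Ext_E = \M_2[v_3]$, a polynomial algebra on one class $v_3$ in homological degree $1$. A degree count fixes the grading: motivically $\tau_3$ lies in stem $2^4 - 1 = 15$ and weight $2^3 - 1 = 7$, so the associated $\Ext^1$ generator $v_3$ sits in degree $(15 - 1, 1, 7) = (14, 1, 7)$, as claimed.

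Finally I would assemble the two factors via a K\"unneth isomorphism. Since $B = A(2) \otimes_{\M_2} E$ as Hopf algebras, the cobar complex of $B_*$ is quasi-isomorphic to the tensor product of the cobar complexes of $A(2)_*$ and $\M_2[\tau_3]/(\tau_3^2)$, so over the ground ring $\M_2 = \F_2[\tau]$, which is a polynomial ring and hence a principal ideal domain, the K\"unneth theorem applies. This yields
\[
\Ext_B \cong \Ext_{A(2)} \underset{\M_2}{\otimes} \Ext_E
= \M_2[v_3] \underset{\M_2}{\otimes} \Ext_{A(2)},
\]
which is the desired isomorphism. I expect the main point requiring care to be the vanishing of the K\"unneth correction terms: because $\Ext_{A(2)}$ carries $\tau$-torsion (for instance, the relations $\tau^2 h_0 d_0 e_0^k = 0$ noted in the introduction), one cannot ignore $\mathrm{Tor}$. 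The saving feature is that the other factor $\Ext_E = \M_2[v_3]$ is a \emph{free} $\M_2$-module, so all $\mathrm{Tor}$ terms vanish regardless of torsion in $\Ext_{A(2)}$, and the K\"unneth map is an isomorphism. The identification of the first factor with $A(2)_*$ and the exterior-algebra computation of $\Ext_E$ are both standard, so once this freeness is invoked the result follows.
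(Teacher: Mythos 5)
Your proposal is correct and follows essentially the same route as the paper, whose proof simply cites the splitting of Lemma \ref{lem:B-split} together with the fact that the cohomology of an exterior algebra is polynomial. You have merely filled in the details the paper leaves implicit (the identification of the first tensor factor with $A(2)_*$, the degree count for $v_3$, and the vanishing of K\"unneth $\mathrm{Tor}$ terms over $\M_2$ because $\M_2[v_3]$ is free), all of which are accurate.
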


\begin{proof}
This follows immediately from the splitting of Lemma \ref{lem:B-split},
together with the observation that the cohomology of an exterior
algebra is a polynomial algebra.
\end{proof}

The projection map $p: A_* \map B_*$ induces a map
$p_*: \Ext_\C \map \Ext_B$.  We will use the map $p_*$
to detect some structural phenomena in $\Ext_\C$.

\section{Massey products in $\Ext_B$}

The map $p_*$ is induced by a map 
$\tilde{p}: C^*(A) \map C^*(B)$ of cobar complexes.
Note that $\tilde{p}$ is a 
map of differential graded algebras.
In particular, $C^*(B)$ is a right $C^*(A)$-module,
and therefore $\Ext_B$ is a right $\Ext_\C$-module.
By definition, $p_* (x)$ equals $1 \cdot x$, where
$1$ is the identity element of $\Ext_B$.

Moreover, the map $\tilde{p}$ makes
$\Ext_B$ into a ``Toda module" over
$\Ext_\C$, in the following sense.
For all $x$ in $\Ext_B$ and all $a$ and $b$ in $\Ext_{\C}$ such that
$x \cdot a$ and $a b$ are both zero, there is a bracket
$\langle x, a, b \rangle$ in $\Ext_B$.  These brackets satisfy the usual
properties.  Later in the proof of Proposition \ref{prop:exist}, 
we will use the
shuffling relation
\[
\langle x, a, b \rangle \cdot c = x \cdot \langle a, b, c \rangle,
\]
for $x$ in $\Ext_B$ and $a$, $b$, and $c$ in $\Ext_\C$.

For later use, we compute one particular bracket.
In $\Ext_\C$, there is an element
$g_2$ of degree $(44,4,24)$ that is detected by
$b_{22}^2$ in the May spectral sequence.
This element satisfies the relation $h_0^3 g_2 = 0$.

\begin{prop}
\label{prop:bracket}
The bracket $\langle 1, g_2, h_0^3 \rangle$ in $\Ext_B$
in degree $(45,6,24)$ equals
$e_0 v_3^2 + h_1^3 v_3^3$, with no indeterminacy.
\end{prop}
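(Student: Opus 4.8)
The plan is to compute the Toda bracket at the level of cobar cochains, using the right $C^*(A)$-module structure on $C^*(B)$ described above. Write $\gamma \in C^4(A)$ for a cobar cocycle representing $g_2$, and recall that $h_0^3$ is represented by $[\tau_0 | \tau_0 | \tau_0] \in C^3(A)$. The bracket is defined because $p_*(g_2) = 0$ and $h_0^3 g_2 = 0$, so there exist cochains $s \in C^3(B)$ and $t \in C^6(A)$ with $d s = \tilde p(\gamma)$ and $d t = \gamma \cdot [\tau_0|\tau_0|\tau_0]$. Since $[\tau_0|\tau_0|\tau_0]$ is a cocycle, a routine check shows that $s \cdot \tilde p([\tau_0|\tau_0|\tau_0]) + \tilde p(t)$ is a cocycle in $C^6(B)$, and its class is $\langle 1, g_2, h_0^3\rangle$. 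Thus the whole computation reduces to producing the nullhomotopies $s$ and $t$ explicitly and then identifying the resulting class.

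First I would pin down $\gamma$. Since $g_2$ is detected by $b_{22}^2$ in the May spectral sequence, its leading term is determined, and one can write down an explicit cobar representative (for instance by importing it from the known May-spectral-sequence or machine computation of $\Ext_\C$ in this range). The cochain $t$ witnessing $h_0^3 g_2 = 0$ is then constructed directly; this is the easier of the two nullhomotopies, since the relation already holds in $\Ext_\C$. The real point is the construction of $s$: here one uses that the defining relations of $B_*$ — in particular $\xi_2^2 = 0$ and $\tau_2^2 = 0$ — force $\tilde p(\gamma)$ to become exact in $C^*(B)$, and the explicit primitive $s$ records precisely how. To control the answer completely I would organize this computation by May weight, so that the leading behaviour is governed by $b_{22}^2$ while the lower-weight corrections are tracked systematically.

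Next I would identify the resulting class. By Proposition \ref{prop:Ext-B}, $\Ext_B \cong \M_2[v_3] \otimes_{\M_2} \Ext_{A(2)}$ with $v_3$ in degree $(14,1,7)$, and $v_3 = [\tau_3]$ at the cochain level. In degree $(45,6,24)$ this graded piece is spanned by $e_0 v_3^2$ and $h_1^3 v_3^3$: removing $v_3^2$ leaves the degree $(17,4,10)$, where $\Ext_{A(2)} = \langle e_0 \rangle$, and removing $v_3^3$ leaves the degree $(3,3,3)$, where $\Ext_{A(2)} = \langle h_1^3 \rangle$, while every other power of $v_3$ lands in an empty $\Ext_{A(2)}$-degree. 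I would therefore project the representative cocycle $s \cdot \tilde p([\tau_0|\tau_0|\tau_0]) + \tilde p(t)$ onto its $[\tau_3|\tau_3]$- and $[\tau_3|\tau_3|\tau_3]$-isotypic parts and read off that both coefficients are nonzero, giving $e_0 v_3^2 + h_1^3 v_3^3$.

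Finally, the indeterminacy of $\langle 1, g_2, h_0^3\rangle$ is $\Ext_B^{(45,3,24)} \cdot h_0^3 + p_*\big(\Ext_\C^{(45,6,24)}\big)$. The first summand vanishes because $\Ext_B^{(45,3,24)} = 0$: running through the powers $v_3^k$ and the matching degrees $(45-14k,\,3-k,\,24-7k)$ in $\Ext_{A(2)}$ shows each lies above the vanishing line of $\Ext_{A(2)}$ or in an empty degree. The second summand vanishes by a finite check that every class of $\Ext_\C$ in degree $(45,6,24)$ maps to zero under $p_*$, using the known structure of $\Ext_\C$ in this range. I expect the main obstacle to be the construction of $s$ with enough precision to capture the lower-weight term $h_1^3 v_3^3$ in addition to the leading term $e_0 v_3^2$: carrying the nullhomotopy past its leading order, rather than merely certifying exactness, is where the genuine work lies.
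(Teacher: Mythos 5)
Your setup is sound: the cobar-level description of the module Toda bracket, the identification of the target group $\Ext_B^{(45,6,24)} = \langle e_0 v_3^2, h_1^3 v_3^3\rangle$, and the indeterminacy analysis (both $\Ext_B^{(45,3,24)} \cdot h_0^3 = 0$ and the vanishing of $p_*$ on $\Ext_\C^{(45,6,24)}$, whose only generator is $h_0 h_5 d_0$) all agree with what the paper does. But the core of the proposal is not a proof; it is a restatement of the problem. Everything hinges on ``producing the nullhomotopies $s$ and $t$ explicitly'' and then ``reading off that both coefficients are nonzero,'' and none of this is carried out. A cobar representative of $g_2$ in stem $44$, filtration $4$ is an enormous expression, and the nullhomotopies $s \in C^3(B)$ and $t \in C^6(A)$ are larger still; no formula, no algorithm output, and no structural argument is given that would determine either coefficient. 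You even flag the decisive issue yourself in the last sentence --- whether the lower-weight term $h_1^3 v_3^3$ appears --- and that is exactly the point at which the proposal stops.

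It is worth seeing how the paper gets around this, because it shows the gap is not a formality. A May spectral sequence argument (the May Convergence Theorem applied to the differential $d_6\bigl((b_{21} b_{40} + b_{30} b_{31}) h_0(1)\bigr) = h_0^3 g_2$) pins down the bracket only up to lower May filtration: it shows the value is detected by $h_{40}^2 b_{21} h_0(1)$, hence is either $e_0 v_3^2$ or $e_0 v_3^2 + h_1^3 v_3^3$, and no amount of ``organizing by May weight'' resolves the ambiguity, since $h_1^3 v_3^3$ lives in strictly lower filtration. The paper needs a genuinely separate input to finish: it applies $p_*$ to the relation $M h_1^6 = e_0^3 + d_0 \cdot e_0 g$ from \cite{GI15}, uses the shuffle $p_*(M h_1^6) = \langle 1, g_2, h_0^3\rangle h_1^6$, and then runs a consistency check on the finitely many possible values of $p_*(e_0)$, $p_*(d_0)$, and $p_*(e_0 g)$ (using $e_0^2 = d_0 g$ and the indecomposability of $e_0 g$) to conclude that the only consistent outcome is $p_*(e_0) = e_0 + h_1^3 v_3$ and that the bracket equals $e_0 v_3^2 + h_1^3 v_3^3$. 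Your proposal contains no analogue of this step, so as written it cannot distinguish the claimed answer from $e_0 v_3^2$; to complete your approach you would either have to import such a relation-based argument or actually exhibit the cochain computation, most likely by machine.
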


\begin{proof}
First, we should verify that the bracket is well-defined.
We need that 
$1 \cdot g_2$ equals zero in $\Ext_B$ in degree
$(44,4,24)$.  But $\Ext_B$ is zero in that degree,
so $1 \cdot g_2$ must be zero.

Next, we compute the indeterminacy.
By inspection, the only possible indeterminacy 
is generated by $1 \cdot h_0 h_5 d_0$.
But this expression is zero because
$1 \cdot h_5$ is zero in $\Ext_B$ for degree reasons.

The map $\tilde{p}$ induces a map
of May spectral sequences \cite{May64} \cite{Isaksen14}.
The May $E_1$-page that converges to
$\Ext_\C$ has generators of the form $h_{ij}$ with $i \geq 1$
and $j \geq 0$.
On the other hand, the May $E_1$-page that converges to
$\Ext_B$ has generators
$h_0$, $h_1$, $h_2$, $h_{20}$,
$h_{21}$, $h_{30}$, and $h_{40}$.
The map of May spectral sequences takes $h_{ij}$ to the element
of the same name, or to zero if the element is not present
in the May $E_1$-page for $\Ext_B$.

We will compute the bracket $\langle 1, g_2, h_0^3 \rangle$
in $\Ext_B$ using the May Convergence Theorem 
\cite{May69} \cite{Isaksen14}.  Beware that this theorem
has a technical hypothesis involving the behavior of higher
``crossing" differentials.  In our specific case,
this technical hypothesis is satisfied for degree reasons.

The key point is that there is a 
May differential 
\[
d_6( (b_{21} b_{40} + b_{30} b_{31}) h_0(1) ) = h_0^3 g_2.
\]
Therefore, $\langle 1, g_2, h_0^3 \rangle$
is detected by the image of  
$(b_{21} b_{40} + b_{30} b_{31}) h_0(1)$
in the May spectral sequence for $\Ext_B$.
By inspection, this image equals
$h_{40}^2 b_{21} h_0(1)$. 

Finally, we must determine the elements of $\Ext_B$
that are detected by $h_{40}^2 b_{21} h_0(1)$ in the
May spectral sequence.
Note that
$e_0$ is detected by $b_{21} h_0(1)$ and
$v_3$ is detected by $h_{40}$.
However, beware that the element $h_1^3 v_3^3$ is detected
by $h_1^3 h_{40}^3$ in lower May filtration.
Consequently,
$h_{40}^2 b_{21} h_0(1)$ detects both
$e_0 v_3^2$ and $e_0 v_3^2 + h_1^3 v_3^3$.

We have now shown that $\langle 1, g_2, h_0^3 \rangle$
equals either
$e_0 v_3^2$ or $e_0 v_3^2 + h_1^3 v_3^3$.
Finally, we must distinguish between these two cases.

Recall from \cite{GI15}*{p.\ 4729} that there is a relation
$M h_1^6 = e_0^3 + d_0 \cdot e_0 g$ in $\Ext_\C$.
Apply $p_*$ to obtain a relation in $\Ext_B$.
We have that
\[
p_*(M h_1^6) = 1 \cdot \langle g_2, h_0^3, h_1^6 \rangle =
\langle 1, g_2, h_0^3 \rangle h_1^6.
\]
Here, we are using the well-known Massey product
\[
M h_1^6 = \langle g_2, h_0^3, h_1 \rangle h_1^5
\]
(see, for example, \cite{Isaksen14}*{Table 16}).
So the possible values for $p_*(M h_1^6)$
are $h_1^6 e_0 v_3^2$ and
$h_1^6 e_0 v_3^2 + h_1^9 v_3^3$.

The possible values for $p_*(e_0)$ are $0$, $e_0$, $h_1^3 v_3$,
and $e_0 + h_1^3 v_3$, so the possible values of
$p_*(e_0^3)$ are $0$, $e_0^3$, $h_1^9 v_3^3$, and
$e_0^3 + h_1^3 e_0^2 v_3 + h_1^6 e_0 v_3^2 + h_1^9 v_3^3$.

The only possible value for $p_*(d_0)$ is $d_0$.
The possible values for $p_*(e_0 g)$ are
$0$, $e_0 g$, $h_1^3 g v_3$, and $e_0 g + h_1^3 g v_3$.
(Recall that $e_0 g$ is an indecomposable element in
$\Ext_\C$.)
Therefore, the possible values for $p_*(d_0 \cdot e_0 g)$
are
$0$, $e_0^3$, $h_1^3 e_0^2 v_3$, and $e_0^3 + h_1^3 e_0^2 v_3$.
Here, we are using the relation $e_0^2 = d_0 g$ in $\Ext_{A(2)}$.

By inspection, 
the only consistent possibilities are that
$p_*(e_0) = e_0 + h_1^3 v_3$,
$p_*(e_0 g) = e_0 g + h_1^3 g v_3$,
and $p_*(M h_1^6) = h_1^6 e_0 v_3^2 + h_1^9 v_3^3$.
\end{proof}

\begin{remark}
The May spectral sequence argument in the proof of 
Proposition \ref{prop:bracket} is much the same as the
corresponding proof in \cite{MPT70}.
However,
the complications involving $h_1^3 v_3^3$ are new.
\end{remark}

\begin{remark}
The careful reader may wonder about the definitional distinction
between $e_0$ and $e_0 + h_1^3 v_3$.  Cannot $e_0$ in $\Ext_B$
be defined to be the value of $p_*(e_0)$?  The answer lies in the
multiplicative structure of $\Ext_{A(2)}$.  There is a relation
$h_1^2 e_0 = c_0 u$ in $\Ext_{A(2)}$.
From the formula $p_*(e_0) = e_0 + h_1^3 v_3$, it follows that
$p_*(h_1^2 e_0)$ is not divisible by $c_0$ or $u$ in $\Ext_B$.
This multiplicative fact is not consistent with the possibility
that $p_*(e_0) = e_0$ under a different choice of basis.
\end{remark}

\section{The Mahowald operator}

\begin{defn}
Let $x$ be an element of $\Ext_\C$ such that $h_0^3 x = 0$.
Define $M x$ to be the Massey product 
$\langle g_2, h_0^3, x \rangle$.
\end{defn}

As always, the Massey product $M x$ can have indeterminacy.
In other words, $M x$ may be a set of elements, not just a single
well-defined element.

If $h_0^3 x = 0$, then 
the iterated Massey products $M^k x = M (M^{k-1} x)$
are defined for all $k \geq 1$.  This follows from the computation
that
\[
h_0^3 \langle g_2, h_0^3, x \rangle =
\langle h_0^3, g_2, h_0^3 \rangle x = 0
\]
because $\langle h_0^3, g_2, h_0^3 \rangle = 0$.

\begin{prop}
\label{prop:exist}
Let $x$ be an element of $\Ext_\C$ such that $h_0^3 x = 0$.
Then 
$p_*(M x)$ equals $(e_0 v_3^2 + h_1^3 v_3^3) p_*(x)$
in $\Ext_B$.
\end{prop}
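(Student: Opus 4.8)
The plan is to reduce the statement to Proposition \ref{prop:bracket} by a single application of the Toda-module shuffling relation. First I would unwind the definitions: by the definition of the Toda module structure, $p_*(Mx) = 1 \cdot Mx = 1 \cdot \langle g_2, h_0^3, x \rangle$, where $1$ is the unit of $\Ext_B$. Here the bracket $\langle g_2, h_0^3, x \rangle$ is defined precisely because $h_0^3 g_2 = 0$ and $h_0^3 x = 0$ by hypothesis.

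Next I would invoke the shuffling relation $\langle y, a, b \rangle \cdot c = y \cdot \langle a, b, c \rangle$ recorded above in the discussion of the Toda module structure, taking $y = 1 \in \Ext_B$, $a = g_2$, $b = h_0^3$, and $c = x$. Since $1 \cdot g_2 = 0$ in $\Ext_B$ and $g_2 h_0^3 = 0$, the bracket $\langle 1, g_2, h_0^3 \rangle$ is defined, and the shuffle yields
\[
\langle 1, g_2, h_0^3 \rangle \cdot x = 1 \cdot \langle g_2, h_0^3, x \rangle = p_*(Mx).
\]
I would then substitute the value computed in Proposition \ref{prop:bracket}, namely $\langle 1, g_2, h_0^3 \rangle = e_0 v_3^2 + h_1^3 v_3^3$, with no indeterminacy. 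Because $\tilde{p}$ is a map of differential graded algebras, $p_*$ is a ring map and the right action of $x \in \Ext_\C$ on $\Ext_B$ is just multiplication by $p_*(x)$; hence
\[
p_*(Mx) = (e_0 v_3^2 + h_1^3 v_3^3) \cdot x = (e_0 v_3^2 + h_1^3 v_3^3)\, p_*(x),
\]
which is exactly the claim.

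The one point deserving care, and the place I expect the only real (if modest) friction to lie, is the treatment of indeterminacy. Because Proposition \ref{prop:bracket} asserts that $\langle 1, g_2, h_0^3 \rangle$ is a \emph{single} element, the left-hand side $\langle 1, g_2, h_0^3 \rangle \cdot x$ of the shuffle is a single element of $\Ext_B$. The shuffling relation is an equality of cosets, so it forces the coset $p_*(Mx)$ to collapse to this single element; equivalently, $p_*$ annihilates the indeterminacy of $Mx$. This one argument therefore settles both the stated value and the indeterminacy assertion of Theorem \ref{thm:main}(2) simultaneously. In writing up the details I would make explicit that all the products appearing ($1 \cdot g_2$, $g_2 h_0^3$, $h_0^3 x$) vanish, so that every bracket in sight is genuinely defined and the shuffle is valid in the required generality within the Toda-module formalism established earlier.
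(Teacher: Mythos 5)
Your proposal is correct and follows essentially the same route as the paper: the paper's proof is exactly the shuffle $p_*(Mx) = 1 \cdot \langle g_2, h_0^3, x \rangle = \langle 1, g_2, h_0^3 \rangle \cdot x$ followed by substituting the value from Proposition \ref{prop:bracket}. Your write-up merely makes explicit what the paper leaves implicit, namely the definedness of each bracket, the identification of the right action of $x$ with multiplication by $p_*(x)$, and the collapse of indeterminacy, all of which are sound.
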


In particular, Proposition \ref{prop:exist} implies that
$p_*(M x)$ always consists of a single element, even if
$M x$ has indeterminacy.

\begin{proof}
Consider the shuffling relation
\[
p_*(M x) = 
1 \cdot \langle g_2, h_0^3, x \rangle =
\langle 1, g_2, h_0^3 \rangle \cdot x.
\]
Proposition \ref{prop:bracket} computes the second
bracket.
\end{proof}

\begin{remark}
We have stated our results in terms of Massey products
of the form $\langle g_2, h_0^3, x \rangle$.  However, they also apply
to Massey products of the form $\langle h_0 g_2, h_0^2, x \rangle$
and $\langle h_0^2 g_2, h_0, x \rangle$, using the shuffling
relations
\[
\langle h_0^2 g_2, h_0, x \rangle \subseteq
\langle h_0 g_2, h_0^2, x \rangle \subseteq
\langle g_2, h_0^3, x \rangle.
\]
\end{remark}

\section{$h_1$-periodic $\Ext$}

In the spirit of \cite{GI15}, one ought to study the 
$h_1$-periodic maps
\[
\xymatrix@1{
\Ext_\C [h_1^{-1}] \ar[r]^{p_*} & \Ext_B [h_1^{-1}] 
\ar[r] & \Ext_{A(2)}[h_1^{-1}].
}
\]
Computationally, this diagram is
\[
\xymatrix@1{
\F_2[h_1^{\pm 1}] [v_1^4, v_2, v_3, \ldots] \ar[r]^{p_*} &
\F_2[h_1^{\pm 1}] [v_1^4, v_2, v_3, u] \ar[r] &
\F_2[h_1^{\pm 1}] [v_1^4, v_2, u].
}
\]
Both maps take $v_1^4$ to $v_1^4$.
Moreover, the composition takes
$v_n$ to $v_2 u^{2^{n-2} - 1}$ 
\cite{GI15}*{Conjecture 5.5 and Proposition 6.4} \cite{GIKR18}.

For degree reasons, $p_*$ takes $v_2$ to $v_2$.
The computations of $p_*(e_0)$ and $p_*(e_0 g)$ at the end of the proof 
of Proposition \ref{prop:bracket} imply that
$p_*(v_3) = v_3 + v_2 u$ and that
$p_*(v_4) = v_3 u^2 + v_2 u^3$.
We suspect that $p_*(v_n) = v_3 u^{2^{n-2} - 2} + 
v_2 u^{2^{n-2} - 1}$ in general, although we have not actually
computed this formula.

On the other hand, the map
\[
\Ext_B [h_1^{-1}] \map \Ext_{A(2)}[h_1^{-1}]
\]
takes $v_2$ and $u$ to the elements of the same name 
in the target, and it must take $v_3$ to $0$.

\section{Non-Realizability}

The purpose of Theorem \ref{thm:realize} is that
$\Ext_B$ is not the $E_2$-page of an Adams $E_2$-page.
In other words, while
$\Ext_B$ is useful for studying the 
algebraic structure of the $\C$-motivic Adams $E_2$-page
$\Ext_\C$, it cannot be used to study Adams differentials.

\begin{thm}
\label{thm:realize}
There does not exist a $\C$-motivic ring spectrum $E$ equipped with
ring map $f: E \map \mmf$
such that the $\F_2$-motivic cohomology of $E$ is $A//B$,
and such that $f$ induces the projection
\[
A//A(2) \map A//B
\]
in cohomology.
\end{thm}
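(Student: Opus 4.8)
The plan is to argue by contradiction, assuming that $E$ and the ring map $f\colon E \map \mmf$ both exist. The first step is to record what $f$ does on Adams $E_2$-pages. Since $H^*(\mmf) = A//A(2)$ and $H^*(E) = A//B$, change of rings identifies the map of Adams $E_2$-pages induced by $f$ with the projection
\[
\Ext_B = \M_2[v_3] \underset{\M_2}{\otimes} \Ext_{A(2)} \map \Ext_{A(2)}
\]
that kills $v_3$ and restricts to the identity on $\Ext_{A(2)}$; this is exactly the map discussed in the $h_1$-periodic section. Because $E$ and $\mmf$ are ring spectra and $f$ is a ring map, this is a map of multiplicative spectral sequences, and in particular it is compatible with all Adams differentials.

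The key structural step is to analyze $f$ through the Milnor primitive $Q_3$, which is dual to $\tau_3$ and has bidegree $(15,7)$. By Lemma \ref{lem:B-split}, $B$ contains $A(2)$ with $B/\!/A(2)$ equal to the exterior algebra $E[Q_3]$, and since $A$ is free over $B$, the module $A//A(2)$ is free over $E[Q_3]$. Consequently the projection $A//A(2) \map A//B$ has kernel isomorphic to $\Sigma^{15,7}(A//B)$, so the cofiber $C$ of $f$ satisfies $H^*(C) \cong \Sigma^{15,7} H^*(E)$ as $A$-modules. I would next upgrade this cohomology isomorphism to an equivalence $C \simeq \Sigma^{15,7} E$ (treating $C$ as an $E$-module via the ring structure and checking the attaching data), and then rotate the cofiber sequence into a self-map
\[
v \colon \Sigma^{14,7} E \map E, \qquad \mathrm{cofib}(v) \simeq \mmf,
\]
detected in Adams filtration $1$ by $v_3$. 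In this way the existence of $E$ over $\mmf$ is reduced to the existence of a $v_3$-type self-map of $E$ whose cofiber recovers $\mmf$; equivalently, $v$ would realize the Mahowald operator $M$ geometrically on $\mmf$, in the sense that multiplication by $v_3$ (hence, via Proposition \ref{prop:exist}, the class $e_0 v_3^2 + h_1^3 v_3^3$) becomes induced by a map of spectra.

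The final step---and the main obstacle---is to derive a contradiction from this self-map. The point is precisely the slogan that $\Ext_B$ detects algebraic structure but not Adams differentials: the self-map $v$ is detected in filtration $1$ by the polynomial generator $v_3$, which forces $v_3$ to be a permanent cycle, so by multiplicativity every Adams differential on an $\Ext_{A(2)}$-class $y$ would propagate $v_3$-linearly to its whole $v_3$-tower $\{v_3^k y\}$. Since $f$ identifies the differentials on the $\Ext_{A(2)}$-part of $E$ with the actual Adams differentials of $\mmf$, I would search the explicit structure of $\Ext_{A(2)}$ and its differentials (from \cite{Isaksen09} and the $\mmf$/$tmf$ literature, recovered after inverting $\tau$) for a class $y$ whose differential behavior cannot be reconciled with a permanent-cycle $v_3$: that is, a $v_3$-periodic family $\{\bar v^{\,k}\}$ generated by the homotopy class $\bar v \in \pi_{14,7}(E)$ underlying $v$ that is incompatible with the fp-type $2$ (hence $v_2$-periodic, not $v_3$-periodic) nature of $\mmf$. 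I expect the hard part to be genuinely computational rather than formal: the chromatic invariants $K(n)_*$ of the cofiber sequence turn out to be consistent, so the obstruction must be located in the fine structure of the $\mmf$-Adams spectral sequence, by exhibiting one specific differential whose forced $v_3$-multiple has no admissible source or target in $\Ext_B$.
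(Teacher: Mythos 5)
There is a genuine gap, and it sits exactly where your proposal defers the work: the final contradiction. The plan you describe for finding it---searching for ``one specific differential whose forced $v_3$-multiple has no admissible source or target in $\Ext_B$''---cannot succeed, because no such differential exists. Since $\Ext_B \cong \M_2[v_3] \otimes_{\M_2} \Ext_{A(2)}$ and multiplication by $v_3^k$ shifts all tridegrees uniformly, the $v_3$-linear extension of the $\mmf$-differentials, $d_r = d_r^{\mmf} \otimes \M_2[v_3]$, is a perfectly consistent multiplicative spectral sequence in which $v_3$ is a permanent cycle and which maps compatibly to the Adams spectral sequence for $\mmf$. In other words, every constraint you impose (multiplicativity, permanence of $v_3$, naturality along $f$) is simultaneously satisfiable, so no amount of inspection of $\Ext_{A(2)}$ and its differentials will produce a contradiction from these inputs alone. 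Your detour through the cofiber of $f$ and the self-map $v\colon \Sigma^{14,7}E \map E$ is also both unnecessary and incomplete (upgrading the isomorphism $H^*(C) \cong \Sigma^{15,7}H^*(E)$ to an equivalence $C \simeq \Sigma^{15,7}E$ is asserted, not proved), but that is secondary: even granting it, you land back at the same consistent picture.

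The missing ingredient is the \emph{unit map} $S^{0,0} \map E$, which your proposal never uses. Because $E$ is a ring spectrum, the unit induces a map of Adams spectral sequences whose $E_2$-page map is $p_*\colon \Ext_\C \map \Ext_B$, so $p_*$ of any permanent cycle for the sphere must be a permanent cycle for $E$. This is where Theorem \ref{thm:main} enters: $Mh_1$ is a permanent cycle in the Adams spectral sequence for the $\C$-motivic sphere, and $p_*(Mh_1) = h_1 e_0 v_3^2 + h_1^4 v_3^3$, so this class must be a permanent cycle for $E$. On the other hand, naturality along $f$ gives $d_2^E(e_0) \equiv h_1^2 d_0$ modulo the ideal $(v_3)$, using the known differential $d_2(e_0) = h_1^2 d_0$ for $\mmf$; then $v_3$-linearity forces
\[
d_2^E\bigl(h_1 e_0 v_3^2 + h_1^4 v_3^3\bigr) \equiv h_1^3 d_0 v_3^2 \quad \text{modulo } (v_3^3),
\]
which is non-zero in $\M_2[v_3]\otimes_{\M_2}\Ext_{A(2)}$ since the $v_3$-degree-$2$ component cannot cancel against higher $v_3$-degree terms. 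The contradiction thus requires playing the two naturality constraints---from the unit and from $f$---against each other; your proposal has only the second half, and that half by itself is not contradictory.
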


\begin{proof}
Suppose that $E$ exists.  
The unit map $S^{0,0} \map E$
induces a map of Adams spectral sequences.  On $E_2$-pages,
this map is
$p_*:\Ext_\C \map \Ext_B$.
By Theorem \ref{thm:main},
the element $M h_1$ of $\Ext_\C$ maps to 
$h_1 e_0 v_3^3 + h_1^4 v_3^3$ in $\Ext_B$.
Since $M h_1$ is a permanent cycle in the Adams spectral sequence
for the $\C$-motivic sphere spectrum
\cite{Isaksen14},
it follows by naturality that
$h_1 e_0 v_3^3 + h_1^4 v_3^3$ is a permanent cycle in
the Adams spectral sequence for $E$.

On the other hand,
the map $f$ also induces a map of Adams spectral sequences.  On
$E_2$-pages, this map takes the form
$\Ext_B \map \Ext_{A(2)}$.
The element $v_3$ must be a permanent cycle for degree reasons.
Also, $d_2(e_0) = h_1^2 d_0$ in the Adams spectral sequence
for $\mmf$.  By naturality of $f$, it follows that
$d_2(h_1 e_0 v_3^3 + h_1^4 v_3^3) = h_1^3 d_0 v_3^3$.

This contradiction shows that $E$ cannot exist.
\end{proof}

\begin{remark}
\label{rem:B-realize}
One can also pose an analogous question about a classical spectrum
whose cohomology is
$A^\cl//B^\cl$.
Such a classical spectrum also does not exist, for essentially
the same reasons.  However, one must use the non-zero 
classical differential
$d_3(e_0) = P c_0$ in the Adams spectral sequence for $\tmf$.
\end{remark}

\bibliographystyle{amsalpha}

\begin{bibdiv}

\begin{biblist}

\bib{Adams66b}{article}{
   author={Adams, J. F.},
   title={A periodicity theorem in homological algebra},
   journal={Proc. Cambridge Philos. Soc.},
   volume={62},
   date={1966},
   pages={365--377},
   review={\MR{0194486}},
}

\bib{Bruner89}{article}{
   author={Bruner, Robert R.},
   title={Calculation of large Ext modules},
   conference={
      title={Computers in geometry and topology},
      address={Chicago, IL},
      date={1986},
   },
   book={
      series={Lecture Notes in Pure and Appl. Math.},
      volume={114},
      publisher={Dekker, New York},
   },
   date={1989},
   pages={79--104},
   review={\MR{988692}},
}

\bib{Bruner93}{article}{
   author={Bruner, Robert R.},
   title={${\rm Ext}$ in the nineties},
   conference={
      title={Algebraic topology},
      address={Oaxtepec},
      date={1991},
   },
   book={
      series={Contemp. Math.},
      volume={146},
      publisher={Amer. Math. Soc., Providence, RI},
   },
   date={1993},
   pages={71--90},
   review={\MR{1224908}},
}

\bib{Bruner97}{article}{
   author={Bruner, Robert R.},
   title={The cohomology of the mod 2 Steenrod algebra: A computer calculation},
   journal={Wayne State University Research Report},
   volume={37},
   date={1997},
}

\bib{GIKR18}{article}{
	author={Gheorghe, Bogdan},
	author={Isaksen, Daniel C.},
	author={Krause, Achim},
	author={Ricka, Nicolas},
	title={$\mathbb{C}$-motivic modular forms},
	date={2018},
	status={preprint},
	eprint={arXiv:1810.11050},
}

\bib{GI15}{article}{
   author={Guillou, Bertrand J.},
   author={Isaksen, Daniel C.},
   title={The $\eta$-local motivic sphere},
   journal={J. Pure Appl. Algebra},
   volume={219},
   date={2015},
   number={10},
   pages={4728--4756},
   issn={0022-4049},
   review={\MR{3346515}},
   doi={10.1016/j.jpaa.2015.03.004},
}

\bib{Isaksen09}{article}{
   author={Isaksen, Daniel C.},
   title={The cohomology of motivic $A(2)$},
   journal={Homology Homotopy Appl.},
   volume={11},
   date={2009},
   number={2},
   pages={251--274},
   issn={1532-0073},
   review={\MR{2591921}},
}

\bib{Isaksen14}{article}{
	author={Isaksen, Daniel C.},
	title={Stable stems},
	journal={Mem. Amer. Math. Soc.},
	status={to appear},
}

\bib{Isaksen14a}{article}{
   author={Isaksen, Daniel C.},
   author={Wang, Guozhen},
   author={Xu, Zhouli},
   title={Classical and motivic Adams charts},
   date={2020},
   status={preprint},
}

\bib{IWX}{article}{
	author={Isaksen, Daniel C.},
	author={Wang, Guozhen},
	author={Xu, Zhouli},
	title={More stable stems},
	status={preprint},
	date={2020},
}

\bib{Li19}{article}{
	author={Li, Ang},
	title={The $v_1$-periodic region of the complex-motivic Ext},
	eprint={arXiv:1912.03111},
	status={preprint},
	date={2019},
}

\bib{MPT70}{article}{
   author={Margolis, Harvey},
   author={Priddy, Stewart},
   author={Tangora, Martin},
   title={Another systematic phenomenon in the cohomology of the Steenrod
   algebra},
   journal={Topology},
   volume={10},
   date={1970},
   pages={43--46},
   issn={0040-9383},
   review={\MR{0300272}},
   doi={10.1016/0040-9383(71)90015-2},
}

\bib{May64}{thesis}{
   author={May, J. Peter},
   title={The cohomology of restricted Lie algebras and of Hopf algebras;
   application to the Steenrod algebra},
   type={Ph.D. dissertation},
   organization={Princeton Univ.}, 
   date={1964},
}

\bib{May69}{article}{
   author={May, J. Peter},
   title={Matric Massey products},
   journal={J. Algebra},
   volume={12},
   date={1969},
   pages={533--568},
   issn={0021-8693},
   review={\MR{0238929}},
   doi={10.1016/0021-8693(69)90027-1},
}
	
\bib{May-unpublished}{article}{
	author={May, J.\ Peter},
	title={Vanishing, approximation, periodicity},
	status={unpublished notes},
	date={{}},
}

\bib{Nassau}{article}{
   author={Nassau, Christian},
   title={\emph{\texttt{www.nullhomotopie.de}}},
}

\bib{Voevodsky03a}{article}{
   author={Voevodsky, Vladimir},
   title={Reduced power operations in motivic cohomology},
   journal={Publ.\ Math.\ Inst.\ Hautes \'Etudes Sci.},
   number={98},
   date={2003},
   pages={1--57},
   issn={0073-8301},
   review={\MR{2031198 (2005b:14038a)}},
   doi={10.1007/s10240-003-0009-z},
}

\bib{Voevodsky03b}{article}{
   author={Voevodsky, Vladimir},
   title={Motivic cohomology with ${\bf Z}/2$-coefficients},
   journal={Publ. Math. Inst. Hautes \'Etudes Sci.},
   number={98},
   date={2003},
   pages={59--104},
   issn={0073-8301},
   review={\MR{2031199 (2005b:14038b)}},
   doi={10.1007/s10240-003-0010-6},
}

\end{biblist}

\end{bibdiv}

\end{document}